\newcommand{\Asp}{{\boldsymbol A}}     %A-space
\newcommand{\CORdN}{{\big( \COsp(\Rst^d), \, \|\ebbes\|_\infty \big)}}     %CO(Rd) Normed
\newcommand{\COsp}{{\Csp_{\negthinspace 0}}}     %C_0-space
\newcommand{\Rst}{{\mathbb R}}     %R set (bold)
\newcommand{\ebbes}{\mbox{$\,\cdot\,$}}     %something (etwas)
\newcommand{\Csp}{{\boldsymbol C}}     %C-space (bold C)
\newcommand{\CbRd}{{\Cbsp(\Rdst)}}     %Cb(Rd) bounded continuous on Rd
\newcommand{\Cbsp}{{\Csp_{\negthinspace b}}}     %Cb-space
\newcommand{\Rdst}{{{\Rst^d}}}     %R^d
\newcommand{\Cst}{{\mathbb C}}     %complex numbers
\newcommand\Hilb{\mathcal H}     %mathcal H (Hilbert space)
\newcommand{\LiG}{{\Lisp(G)}}     %L1(G)
\newcommand{\Lisp}{{\Lsp^1}}     %L1-space
\newcommand{\LiGN}{\big( \LiG, \, \|\ebbes\|_1 \big)}     %L1(G) Normed
\newcommand{\LiRd}{{\Lisp \nth (\Rst^d)}}     %L1(Rd)
\newcommand\nth{\negthinspace}     %negthinspace
\newcommand{\LiRdN}{\big( \LiRd, \, \|\ebbes\|_1 \big)}     %L1(Rd) Normed
\newcommand\Liloc{ {\Lsp^1_{ \negthinspace \it{loc}} } }     %Liloc (new version)
\newcommand{\Lsp}{{\boldsymbol L}}     %L-space (L^1, etc.)
\newcommand{\LtR}{{\Ltsp(\Rst)}}     %L2(R)
\newcommand{\Ltsp}{{\Lsp^2}}     %L2-space
\newcommand{\LtRd}{{\Ltsp(\Rst^d)}}     %L2(Rd)
\newcommand{\LtRdN}{\big( \LtRd, \, \|\ebbes\|_2 \big)}     %L2(Rd) - normed
\newcommand{\MbRd}{{\Mbsp(\Rst^d)}}     %bounded measures on R^d
\newcommand{\Mbsp}{{\Msp_{\negthinspace b}}}     %Mb-space
\newcommand\MbRdN{{(\Mbsp(\Rst^d), \| \ebbes \|_\Mbsp )}}     %bounded measures with norm
\newcommand{\Msp}{{\boldsymbol M}}     %M-bold
\newcommand{\MiRdN}{{ (\Msp^1(\Rdst), \| \ebbes \|_{\Msp^1}) }}     %M^1(Rd) Normed
\newcommand\MsTt{{M_s T_t}}     %M_s T_t
\newcommand{\Nst}{{\mathbb N}}     %N natural numbers
\newcommand{\Rdhat}{{\widehat {\Rdst}}}     %Rd hat
\newcommand{\Rdsth}{{\widehat{\Rst}^d}}     %Rd-set hat, frequency domain
\newcommand{\SOG}{{\SOsp(G)}}     %SO(G)
\newcommand{\SOsp}{{\Ssp_{\negthinspace 0}}}     %SOsp
\newcommand{\SOGN}{\big( \SOG, \|\ebbes\|_\SOsp \big)}     %SO(G) Normed
\newcommand{\SOGTrRd}{{ (\SOsp,\Ltsp,\SOPsp)(\Rdst) }}     %SO-Gelfand Triple on Rd
\newcommand{\SOPsp}{{\Ssp_{\negthinspace 0}'}}     %S_0' space (S0-prime space)
\newcommand{\SOPN}{(\SOPsp , \| \ebbes \|_{\SOPsp} ) }     %S0' normed
\newcommand{\SOPR}{{\SOPsp(\Rst)}}     %SO'(R) S0-prime
\newcommand{\SOPRN}{(\SOPR , \| \ebbes \|_{\SOPsp} ) }     %S0'(R) normed
\newcommand{\SOPRd}{{\SOPsp(\Rst^d)}}     %S0' (Rd)
\newcommand{\SOPRdN}{(\SOPRd , \| \ebbes \|_{\SOPsp} ) }     %S0'(Rd) normed
\newcommand{\SOPnorm}[1]{{\lVert #1 \rVert_\SOPsp}}     %SOP-norm
\newcommand{\Ssp}{{\boldsymbol S}}     %S-space
\newcommand{\SORd}{{\SOsp(\Rst^d)}}     %S0(Rd)
\newcommand{\SORdN}{\big( \SORd, \|\ebbes\|_\SOsp \big)}     %S0(Rd) normed
\newcommand{\SOnorm}[1]{{\lVert #1 \rVert_\SOsp}}     %SO-norm (arg)
\newcommand{\Scsp}{{\boldsymbol{\mathcal S}}}     %Schwartz space (mathcal)
\newcommand{\ScRd}{{\Scsp(\Rst^d)}}     %Schwartz space Rd (recommended version)
\newcommand{\TFd}{{{ \Rdst \times \Rdsth }}}     %TF over Rd
\newcommand{\Tst}{\mathbb{T}}     %torus (1D torus): unitary group
\newcommand{\Ust}{\mathbb{U}}     %torus (1D torus): unitary group
\newcommand{\Zdst}{{\Zst^d}}     %Zd (official version)
\newcommand{\Zst}{{\mathbb Z}}     %Z mathbb
\newcommand\abZZd{ {a \Zdst \times b \Zdst}}     %aZd x b Zd (alternative version)
\newcommand{\cG}{\mathscr{G}}     %Calligraphic Letter
\newcommand\cGd{{\widehat{\cG}}}     %cG dual group
\newcommand\clam{{c_\lambda}}     %c_lambda
\newcommand\dlam{{d\lambda}}     %d lambda
\newcommand{\gd}{{\widetilde{g}}} %%mdef     %gd:wtilde (g), dual Gabor atom
\newcommand\hatf{{\widehat{f}}}     %f hat (cf. tilde f)
\newcommand\hatsi{{\widehat{\sigma}}}     %hat of sigma
\newcommand\hkr{\hookrightarrow}     %hookrightarrow (short)
\newcommand{\kiZd}{{{k \in \Zdst}}}
\newcommand{\lainLa}{{\lambda{\in}\Lambda}}     %lambda in Lambda
\newcommand\liLam{{\lisp(\Lambda)}}     %l1(Lambda)
\newcommand{\lisp}{{\lsp^1}}     %l1-space
\newcommand{\lsp}{{\boldsymbol\ell}}     %ell-space (e.g. \ell^p)
\newcommand\pilamg{ \pi(\lambda) g}     %pi(lambda) g
\newcommand{\projtens}{{ \widehat{\otimes} }}     %projective tensor
\newcommand\siSOP{{ \sigma \in \SOPsp }}     %sigma in SO prime
\newcommand\signN{{ (\sigma_n)_{n \in \Nst}} }     %sigma_n, n in N sequence
\newcommand\signo{ \sigma_0 = \wst-\mbox{lim}_n \sigma_n }     %sigma_0 = wst-lim s_n
\newcommand{\sumkZd}{\sum_{\kiZd}}     %sum k in Zd
\newcommand{\sumlaLa}{\sum_{\lambda\in\Lambda}}     %sum_{lambda in Lambda}
\newcommand{\sumnZ}{\sum_{n\in\Zst}}     %sum n in Z
\newcommand\sumnZd{\sum_{n\in\Zdst}}     %sum n in Zd
\newcommand{\supnorm}[1]{{\lVert #1 \rVert_\infty}}     %supnorm arg1 #1
\newcommand{\supp}{\operatorname{supp}}     %support (of a function or distribution)
\newcommand{\tensor}{\otimes}     %tensor (o-times)
\newcommand{\wdash}{{ w^* \negthinspace \mbox{-}}}     %w*-dash
\newcommand\wst{ w^{*} }     %weak star
\newcommand\wstd{{w^* \negthinspace -}}     %wst dash
\newcommand{\wwst}{{\wdash \wdash}}     %wst-to-wst continuous
\def\FeiAdr{Hans G. Feichtinger, Faculty of Mathematics, University of Vienna \newline and Acoustic Research Institute, OEAW, AUSTRIA}
\def\cG{G}
\def\normta#1#2{{  \| {#1}   \|_{#2} \, }} 
\begin{document}

\title{Fourier Analysis via Mild Distributions: \newline
  Group Theoretical Aspects}
% Use \titlerunning{Short Title} for an abbreviated version of
% your contribution title if the original one is too long
\author{Hans G. Feichtinger} %order: first (given) name family name (no abbreviations!)
% Use \authorrunning{Short Title} for an abbreviated version of
% your contribution title if the original one is too long
\institute{Hans G. Feichtinger \at Faculty of Mathematics, University
\at \email{hans.feichtinger@univie.ac.at} % %order: first (given) name family name (first name abbreviated!)
% \and Name of Second Author \at Name, Address of Institute
}  %Provide only (preferably university) e-mail of corresponding author!

\maketitle

\abstract{ This note aims at providing a guide towards the use of mild distributions, or more generally the concept of Banach Gelfand Triples in the context of Fourier Analysis, both in the classical and the application oriented sense. }
%Do not cite articles in abstract by using \cite{} command. Use e.g. Maes and Van Bockstal (Fract Calc Appl Anal 26(4) 2023, Lemma 5)

\keywords{Feichtinger's algebra; mild distributions; Fourier transform; convolution; LCA groups}
\\
{{\bf MSC2020:}  43A05; 22A05 ; 43A15 .}

\section{Introduction}  % Fei:  [fe24-5]

The Segal algebra $\SORdN$, also known as  {\it Feichtinger's algebra},
has been already introduced in 1979 by the author. It was published in \cite{fe81-2}, and \cite{ja18}  provides a recent survey. It turned out to be
an important tool in Gabor Analysis and in Time-Frequency Analysis
in general. It also appears in  \cite{gr01} for a mathematical introduction to this field. Among others windows in $\SORd$ guarantee the convergence
of Gabor series for general $f \in \LtRd$ and thus is also a good basis
for the study of Gabor multipliers.

When it comes to the description of operators in general the triple,
also called ``THE Banach Gelfand Triple'' $\SOGTrRd$  (BGT) turns out to be
quite useful. It consists of the space $\SORdN$ (which can be viewed as a
Banach algebra of test functions with pointwise multiplication), the
Hilbert space $\LtRdN$ and the dual space $\SOPRdN$, the spBGace of
``{\it mild distributions}''. The form a chain of continuous inclusions:
$$ \SORdN \hkr \LtRdN \hkr \SOPRdN. $$
The (isometric) Fourier invariance of $\SORdN$ and consequently of its
dual allows to describe the Fourier transform in the BGT-setting as
a {\it unitary BGT-automorphism}. At the level of $\Hilb = \LtRd$ this
corresponds just to Plancherel's Theorem, but in the context of
$\SOPRd$  it is characterized by mapping pure
frequencies $\chi_s(t) = \exp(2 \pi i s t)$ into the corresponding
Dirac measures $\delta_x \in \SOsp(\widehat{\Rdst})$.
The concept of BGTs goes back to  
  \cite{feko98} and \cite{fezi98} (which appeared in \cite{fest98}),
and a first summary was presented in \cite{cofelu08}, see also the master
thesis \cite{ba10-2}. It appears as modulation space $\MiRdN$ in 
\cite{gr01} and many other places (see \cite{fe03} and \cite{fe06} for
the connection to modulation spaces, or the books \cite{coro20} and \cite{beok20}). The ``inner kernel theorem'' is discussed in \cite{feja22}. 

On the other hand engineers learn about {\it discrete} or {\it continuous} signals (time series, indexed by $\Zst$, or continuous functions $f(t)$),
where $t \in \Rst$ is interpreted as time. A signal can be {\it periodic}
 or {\it non-periodic} (which typically means that it is
somehow decaying at infinity, such as signals of finite energy, modelled as
$f \in \LtR$). Correspondingly, there is a particular form of Fourier transform
for each case (including the multi-dimensional ones). For discrete and
periodic signals (which can be identified with the Euclidean space $\Cst^N$,
for some $N \in \Nst$) one has to use the DFT, implemented as FFT (Fast Fourier Transform). Sampling or restriction to a subgroup corresponds to periodization
(or integration) along the orthogonal subgroup, and so on. The aim of this
short note is to point out how these transitions can be described using
the theory of mild distributions.

Let us just recall that $\SORd$ is an (isometrically) Fourier invariant
Banach space, a Banach algebra with respect to  pointwise multiplication {\it and} convolution. Functions in $\SORd$ are (absolutely Riemann) integrable
and bounded, continuous functions on $\Rdst$. Thus $\SORdN$
is a Banach algebra continuously embedded into $\CORdN$ (with pointwise
multiplication) and into $\LiRdN$ (a Banach algebra with respect to convolution). Both the classical Fourier inversion theorem holds true
in the context of $\SORd$ (in a pointwise sense). The Schwartz space
$\ScRd$ is continuously embedded into $\SORdN$ as a dense subspace. Moreover,
Poisson's formula holds true for arbitrary lattices (discrete and cocompact
subgroups of $\Rdst$). In the classical form it reads $\sumkZd f(k) = \sumnZd \hatf(n)$, with absolute convergence
 on both sides (Cor.12.1.5 in \cite{gr01}). 

\section{Mild Distributions}

% For those familiar with the theory of
Starting from L.~Schwartz's theory of 
 {\it tempered distributions}
we can define {\it mild distributions} by a direct boundedness
condition (equivalent to the boundedness of the Short-Time Fourier
transform, the STFT, see \cite{gr01}). The finiteness
of the following norm defines an {\it admissible norm} for $\SOPRd$,  in the sense of H.~Triebel (\cite{tr16}): 
\begin{equation}\label{SOPnormAdm}
   \SOPnorm{\sigma}  := 
   \sup_{(t,s) \in \TFd}  \sigma(\MsTt g_0) < \infty,
\end{equation}
with the usual conventions $\lambda = (t,s), \pi(\lambda) = \MsTt$, with
$T_t f(x) = f(x-t)$ and $M_sf(x) = \exp(2 \pi i s x) f(x)$, $x,t,s \in \Rdst$,
 or $(t,s) \in \TFd$,  see \cite{gr01} for details. The  
 STFT of $f$ with window $g$ on $\TFd$ is given by 
$V_gf(\lambda) = \langle f , \pilamg \rangle$.  
% Definition STFT on $\ScPRd$.

% By observing that the STFT (with any Schwartz window $g \in \ScRd$)
% is a continuous function which has at most polynomial growth  it is
% clear that those tempered distributions with a bounded STFT form
% a subspace. % So let us define {\it mild distributions} next:

%%% Definition: Mild distributions as a (Banach) subspace of
%%% the space of tempered distributions, with the sup-norm.

Alternatively one can introduce mild distributions by a suitable
sequential approach, which is of course the analogue of the Lighthill
approach to tempered distributions. While the equivalence of the
sequential approach to the topological approach to generalized functions
is highly non-trivial in the Schwartz case (based on results by G.~Temple
established around 1955) this is relatively easy in the context of
mild distributions (see \cite{fe20-1}).

% \cite{fe20-1}

\subsection{Convergence of Mild Distributions}

It turns out that norm-convergence in $\SOPRN$ is too strict, like 
norm convergence in $\MbRdN$, the space of bounded measures. One has
$ \normta{ \delta_x - \delta_y} {\SOPsp} = 2 $ for $x \neq y$. Hence it
is important to replace it by the so-called $\wstd$convergence, which can
be described by sequences:
% \begin{definition} \label{SOPconv002}  xxx  \end{definition}
%
\begin{lemma}  \label{wstSOPsequ1}
Convergence in the $\wstd$topology   can be described by sequential 
convergence.  
A sequence $\signN$ is $\wstd$convergent  with limit
 $\sigma_0 \in \SOsp$ if and only if  
\begin{equation}\label{signconv02}
   \lim_{n \to \infty} \sigma_n(f) = \sigma_0(f), \quad f \in \SOsp.
\end{equation}
We write $\signo$ in this case.
\end{lemma}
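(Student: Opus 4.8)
The plan is to treat the two assertions of the lemma separately. The characterisation \eqref{signconv02} of sequential $\wstd$convergence is, once unwound, a direct consequence of the definition of the $\wstd$topology $\sigma(\SOPsp,\SOsp)$ as the coarsest topology on $\SOPRd$ making every evaluation map $\sigma\mapsto\sigma(f)$, $f\in\SOsp$, continuous: a basic $\wstd$neighbourhood of $\sigma_0$ has the form $\{\sigma:\,|\sigma(f_i)-\sigma_0(f_i)|<\varepsilon,\ i=1,\dots,k\}$, so a sequence converges to $\sigma_0$ in this topology exactly when $\sigma_n(f)\to\sigma_0(f)$ for every single $f\in\SOsp$. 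Hence \eqref{signconv02} needs no further argument. The substance of the lemma is the first sentence: the assertion that, for the purposes at hand, the $\wstd$topology on $\SOPRd$ is faithfully captured by its convergent sequences --- a genuine point, since the $\wstd$topology on the dual of an infinite-dimensional Banach space is not metrisable.

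To make that precise I would invoke two properties of $\SORdN$. First, the uniform boundedness principle applied to the Banach space $\SORdN$: any sequence $\signN$ in $\SOPRd$ with $\sup_n|\sigma_n(f)|<\infty$ for each $f\in\SOsp$ --- in particular any pointwise convergent one --- satisfies $\sup_n\SOPnorm{\sigma_n}<\infty$, using that $\SOPnorm{\ebbes}$ is equivalent to the canonical dual norm on $\SORd^{\,*}$; the same principle shows that a pointwise limit of such a sequence is automatically a bounded functional, i.e.\ an element of $\SOPRd$, so the hypothesis $\sigma_0\in\SOPsp$ is not really a restriction. Second, the separability of $\SORdN$, which holds because $\ScRd$ lies densely in $\SORdN$. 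By the Banach--Alaoglu theorem the closed balls $B_R=\{\sigma\in\SOPsp:\SOPnorm{\sigma}\le R\}$ are $\wstd$compact, and separability of the predual makes each $\big(B_R,\sigma(\SOPsp,\SOsp)\big)$ metrisable. Thus the $\wstd$topology is metrisable on every bounded subset of $\SOPRd$, and since by the first point every $\wstd$convergent sequence, as well as every sequence satisfying \eqref{signconv02}, lies in some $B_R$, all questions about $\wstd$convergence of sequences can be settled by a metric on the relevant ball. That is the sense in which the $\wstd$topology is described by sequences.

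A convenient byproduct worth recording is that, thanks to the uniform bound $C:=\sup_n\SOPnorm{\sigma_n}<\infty$ and a three-$\varepsilon$ estimate, it suffices to verify \eqref{signconv02} on a merely dense subspace of $\SORd$, e.g.\ on $\ScRd$: for $f\in\SORd$ and $\varphi\in\ScRd$ one bounds $|\sigma_n(f)-\sigma_0(f)|$ by a constant multiple of $\SOnorm{f-\varphi}$ plus $|\sigma_n(\varphi)-\sigma_0(\varphi)|$, and then first chooses $\varphi$ close to $f$, then $n$ large. The only genuine obstacle, and the thing one must not overlook, is that the $\wstd$topology on all of $\SOPRd$ is \emph{not} metrisable --- the predual $\SORd$ being infinite-dimensional --- so sequences cannot describe it globally; everything rests on the reduction to bounded sets, which in turn relies on the separability of $\SORdN$ and on the uniform boundedness principle keeping the relevant sequences bounded. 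Once these two ingredients are in place the remaining verification is routine.
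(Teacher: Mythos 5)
Your argument is correct and follows essentially the same route as the paper's own (much terser) proof: separability of the predual $\SORdN$ to get metrisability of the $\wstd$topology on norm-bounded sets, plus the Banach--Steinhaus theorem to reduce everything to such sets. Your write-up simply spells out the details (Banach--Alaoglu, the reduction to balls $B_R$, the density argument) that the paper leaves implicit.
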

\begin{proof}
That the convergence can be described by sequences and not just nets
can be justified by the fact that $\SORdN$ is a separable Banach space.
On the other hand, pointwise convergence of a sequence of linear functionals
implies its boundedness by the Banach-Steinhaus Theorem.
\end{proof}
\begin{remark}
Due to the fact that according to our understanding this type of convergence
is the most relevant in the space $\SOPsp$ of mild distribution we will aslo
speak of {\it mild convergence}, as oppposed to {\it strong} (i.e., norm) convergence in $\SOPN$.
\end{remark}
There are various useful equivalent description of mild convergence
\begin{theorem} \label{mildconvchar03}
The following conditions provide equivalent descriptions of mild
convergence for bounded sequences $\signN$  in $\SOPRd$:
% equivalent conditions is satisfied:
% A bounded sequence $\signN$ is mildly convergent if and only one of the following
% equivalent conditions is satisfied:
\begin{enumerate}
  \item For some/any non-zero $g \in \SORd$ one has
      pointwise convergence of $V_g\sigma_n(\lambda)$, $\lambda \in \TFd$;
  \item For some/any non-zero $g \in \SORd$ one has
     uniform convergence over compact subsets of $\TFd$;
  \item For some lattice $\abZZd$ with $ab < 1$ one has
      pointwise convergence of the canonical (minimal norm)
      Gabor coefficients $(c^{(n)}_{k,l})$ providing the
      Gabor expansion
      $  \sigma_n = \sum_{k,l} c^{(n)}_{k,l} M_{bl} T_{ak} g_0,$
      with $g_0(t) = \exp(- \pi |t|^2)$, the standard Gaussian.
\end{enumerate}
\end{theorem}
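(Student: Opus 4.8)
The plan is to prove the three characterizations equivalent by establishing the cycle $(1)\Rightarrow(2)\Rightarrow(3)\Rightarrow(1)$, while freely using that all sequences under consideration are assumed norm-bounded in $\SOPRd$ — this uniform bound is what upgrades pointwise statements to uniform ones and makes the duality pairings behave well. Throughout, recall that $V_g\sigma_n(\lambda) = \sigma_n(\pi(\lambda)g)$, so condition (1) with the specific window $g_0$ is literally the definition of mild convergence given in Lemma~\ref{wstSOPsequ1} and equation \eqref{signconv02} restricted to the coherent family $\{\pi(\lambda)g_0\}$. The first order of business is therefore to note that mild convergence (pointwise on all of $\SOsp$) trivially implies (1) for \emph{any} window $g\in\SORd$, and conversely that (1) for \emph{some} window $g$, together with boundedness, implies mild convergence: this uses the fact that finite linear combinations of time-frequency shifts $\pi(\lambda)g$ are dense in $\SORdN$ (a standard consequence of the Gabor/coherent-state machinery for $\SOsp$), so pointwise convergence on a dense set plus a uniform norm bound gives pointwise convergence everywhere by the usual $3\varepsilon$-argument. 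This simultaneously disposes of the "some/any" phrasing in (1) and (2).

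Next I would prove $(1)\Rightarrow(2)$. Fix a nonzero $g\in\SORd$ and a compact set $K\subset\TFd$. The key analytic input is that the map $\lambda\mapsto \pi(\lambda)g$ is continuous from $\TFd$ into $\SORdN$ (norm continuity of time-frequency shifts on Feichtinger's algebra). Hence $\{\pi(\lambda)g:\lambda\in K\}$ is a norm-compact subset of $\SORdN$. A bounded sequence of functionals $(\sigma_n)$ that converges pointwise converges uniformly on norm-compact sets — this is again a standard equicontinuity argument: cover the compact set by finitely many $\varepsilon$-balls, use pointwise convergence at the centers, and control the rest by $\sup_n\SOPnorm{\sigma_n}\cdot\varepsilon$. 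This gives uniform convergence of $V_g\sigma_n$ on $K$, which is (2). The implication $(2)\Rightarrow(1)$ is trivial since uniform convergence on compacta implies pointwise convergence.

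For the equivalence with (3), the essential tool is the Gabor expansion theory for $\SOPRd$: for a lattice $\abZZd$ with $ab<1$ and the Gaussian window $g_0$, one has a bounded analysis/synthesis pair, where the "canonical" (minimal-$\lisp$-norm, i.e. dual-frame) coefficients are $c^{(n)}_{k,l} = \langle \sigma_n, \pi(ak,bl)\gamma_0\rangle = V_{\gamma_0}\sigma_n(ak,bl)$ for the canonical dual window $\gamma_0\in\SORd$, and the synthesis map $(c_{k,l})\mapsto \sum_{k,l}c_{k,l}M_{bl}T_{ak}g_0$ is bounded from $\ell^\infty$ into $\SOPRd$. Given this: $(2)$ (or $(1)$) applied with the window $\gamma_0$ immediately yields pointwise convergence of $c^{(n)}_{k,l}$, since these are just samples of $V_{\gamma_0}\sigma_n$; that is $(1)\Rightarrow(3)$. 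Conversely, assuming $(3)$, boundedness of $(\sigma_n)$ in $\SOPRd$ forces the coefficient sequences $(c^{(n)}_{k,l})_{k,l}$ to be uniformly bounded in $\ell^\infty$, and then pointwise convergence of the coefficients plus the uniform $\ell^\infty$ bound lets one pass to the limit in the weak-$*$ sense inside the synthesis formula $\sigma_n = \sum_{k,l}c^{(n)}_{k,l}M_{bl}T_{ak}g_0$: for any fixed $f\in\SORd$, $\sigma_n(f) = \sum_{k,l}c^{(n)}_{k,l}\langle M_{bl}T_{ak}g_0, f\rangle$ where the tail is controlled uniformly in $n$ because $(\langle M_{bl}T_{ak}g_0,f\rangle)_{k,l}\in\lisp$ (the Gabor coefficients of $f\in\SORd$ lie in $\liLam$), so dominated convergence for series applies and $\sigma_n(f)$ converges — which is mild convergence, hence $(1)$.

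The main obstacle I anticipate is not any single implication but rather pinning down precisely which pieces of the Gabor-frame apparatus for $\SOGTrRd$ one is entitled to quote: specifically that $\{\pi(\lambda)g:\lambda\in K\}$ is norm-compact in $\SORd$, that the canonical dual window $\gamma_0$ stays in $\SORd$ (the deep fact, due to the structure theory of Gabor frames on Feichtinger's algebra), that synthesis is bounded $\ell^\infty\to\SOPRd$, and that analysis maps $\SORd\to\liLam$. Once these are granted as black boxes, every implication reduces to the same elementary pattern — uniform bound plus convergence on a total set — so the write-up should emphasize the role of the boundedness hypothesis and the density of coherent states, and relegate the frame-theoretic facts to citations of \cite{gr01} (e.g. the material around Gabor frames for modulation spaces) and the BGT references \cite{feko98,cofelu08}.
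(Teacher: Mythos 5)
Your proposal is correct and follows essentially the same route as the paper: boundedness plus the atomic/coherent-state density of time-frequency shifts in $\SORd$ for the equivalence of (1) with mild convergence, norm-continuity of $\lambda\mapsto\pi(\lambda)g$ and compactness of its orbit for (2), and the identification of the canonical coefficients as samples $V_{\gd}\sigma_n(ak,bl)$ with $\gd\in\SORd$ for (3). The only place you add substance is the implication $(3)\Rightarrow(1)$, which the paper dismisses as an easy exercise and which you correctly carry out via the uniform $\lsp^\infty$ bound on the coefficients, the $\lisp$-summability of the Gabor coefficients of a fixed $f\in\SORd$, and dominated convergence for series.
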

\begin{proof} We only provide the key arguments here.

Since the sequence $\signN$ is bounded it is enough to relate the pointwise convergence of $V_g\sigma(\lambda) = \sigma(\overline{\pilamg})$ to the
convergence of finite linear combinations of TF-shifts of $g$. Due to well-known atomic characterization of $\SORdN$ (for any nonzero $g \in \SORd$)
one concludes that one has $\wstd$convergence.

In a similar way a standard approximation argument allows to derive uniform
convergence of the STFT from the pointwise convergence, using the fact that
$\lambda \to \pilamg$ is a continuous mapping from $\TFd$ to $\SORdN$, and thus
for any compact set $K \subset \TFd$ the range of this mapping is a compact
subset of $\SORdN$.

The third point relies on the fact that the minimal norm coefficients are
obtained by choosing $c^{(n)}_{k,l} = V_{\gd}\sigma_n(ak,bl)$, where
$\gd$ is the canonical dual Gabor atom for $g$ with respect to the TF-lattice
$\abZZd$. Deep results in Gabor analysis imply that $g_0$ generated a Gabor
frame for this lattice if $ab < 1$, with a dual window also in $\ScRd$. Alternatively, another key result of Gabor analysis implies that
$gd \in \SORd$  if some $ g \in \SORd$ generates a Gabor frame. Anyway,
the canonical Gabor coefficients are just samples of the $V_{\gd}\sigma$.
It is well known that they are bounded with 
$$ \supnorm{V_g\sigma} \leq C \SOPnorm{\sigma}, \quad \sigma \in \SOPsp. $$

Finally, coordinatewise convergence of the canonical
(or even non-canonical) Gabor coefficients (with respect to any dual window
in $\SORd$) implies locally uniform convergence of the corresponding
STFTs $V_g \sigma_n$ is an easy exercise.
\end{proof}

\subsection{Poisson's Formula and Dirac Combs}

The paper \cite{fe24} gives a detailed study of Dirac combs and their role for sampling (along a discrete lattice in $\Rdst$), and that this corresponds to
periodization of the Fourier transform of any function $f \in \SORd$.
The so-called BUPUs (Bounded Uniform Partitions of Unity) discussed in
\cite{fe24-2} allow to return approximately from the sampled version of
a function $f \in \SORd$ to a continuous approximation of $f$, if the
sampling lattice is fine enough. Here sampling corresponds to the restriction
$R_H(f)$ of $f$ to a subgroup. The adjoint mapping is given by 
$R^*_H \mu(f) = \mu(R_H f)$, for $\mu \in \MbRd$. Clearly 
$\supp(R^*_H(\mu)) \subseteq H$. Via the Fourier transform restriction
mapping and the canonical mapping $T_{H^\perp}$ correspond to each other. 
Also discrete and periodic signals belong to $\SOPRd$ and their (distributional) FT can be realized using the FFT algorithm.

\section{Group Theoretical Considerations}

First we  recall the characterization of elements
 $\siSOP$ with special properties. For their description we need two
 standard definitions: Given a subgroup $H \lhd \Rdst$  we call a function
 $f \in \CbRd$ $H$-periodic if one has $T_h f = f, h \in H$. The sequential
 approach to $\SOPsp$ allows to extend this notion to all of $\SOPRd$.
 Equivalently we can define translation of distributions by $T_{h}\sigma(f) := \sigma(T_{-h}f), h \in H, f \in \SORd$. 

 The support $\supp(\sigma)$ of $\siSOP$ is defined as the complement of
 the open set of ``trivial action''. A point $z \in \Rdst$ is a point of (locally) trivial action of $\sigma$ if for a sufficiently small ball $B_\delta(z)$ around  $z$ one has $\sigma(k)=0$ for all the functions $k \in \SORd$ with  $\supp(k) \subseteq B_\delta(0)$.

\vspace{2mm} 
 With this terminology we are ready to provide the following characterizations.

 \begin{proposition}  \label{charsuppH}
 Given a lattice $ \Lambda \lhd \Rdst$ we have: \newline 
 A distributions
 $\sigma \in \SOPRd$ satisfies $\supp(\sigma) \subset \Lambda$ if and only
 if it is a weighted Dirac comb, for some bounded sequence ${\bf c} = (\clam)_{\lainLa}$, i.e.,  $\sigma = \sumlaLa \clam \dlam$.
 \end{proposition}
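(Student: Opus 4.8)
The plan is to prove both implications, the substance lying in the ``only if'' direction, which I would reduce to identifying a mild distribution supported in a single point. For the (easy) ``if'' direction: if $\mathbf{c}=(\clam)_{\lainLa}$ is bounded then $\sigma(f):=\sumlaLa\clam f(\lambda)$ is well defined and bounded on $\SORd$, because restriction to a lattice maps $\SORd$ boundedly into $\liLam$ (part of the standard $\SORd$-toolkit, manifested in the absolute convergence in Poisson's formula recalled in the Introduction), so that $|\sigma(f)|\le\|\mathbf{c}\|_\infty\sumlaLa|f(\lambda)|\le C\,\|\mathbf{c}\|_\infty\,\SOnorm{f}$; hence $\siSOP$, and its finite partial sums converge to it in the $\wstd$sense by Lemma~\ref{wstSOPsequ1}, so $\sigma=\sumlaLa\clam\delta_\lambda$ really is a weighted Dirac comb in $\SOPRd$. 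Moreover $\supp(\sigma)\subseteq\Lambda$: since $\Lambda$ is discrete and closed, every $z\notin\Lambda$ has a ball $B_\delta(z)$ meeting $\Lambda$ trivially, and then $\sigma(k)=0$ for all $k\in\SORd$ with $\supp(k)\subseteq B_\delta(z)$, so $z$ is a point of trivial action.

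For the ``only if'' direction, let $\siSOP$ with $\supp(\sigma)\subseteq\Lambda$. I would first localise with a BUPU adapted to the lattice: choose $\psi_0\in C_c^\infty(\Rdst)\subseteq\SORd$ with $\supp(\psi_0)\cap\Lambda=\{0\}$ and $\sumlaLa T_\lambda\psi_0\equiv1$ (which forces $\psi_0(0)=1$). For every $f\in\SORd$ the expansion $\sumlaLa(T_\lambda\psi_0)f$ converges unconditionally to $f$ in $\SORd$ (a standard property of BUPUs), so by continuity $\sigma(f)=\sumlaLa\sigma_\lambda(f)$, where $\sigma_\lambda:=(T_\lambda\psi_0)\cdot\sigma\in\SOPRd$ satisfies $\supp(\sigma_\lambda)\subseteq\supp(T_\lambda\psi_0)\cap\supp(\sigma)\subseteq\supp(T_\lambda\psi_0)\cap\Lambda=\{\lambda\}$. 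Granting the single-point statement below, $\sigma_\lambda=\clam\delta_\lambda$ with $\clam=\sigma_\lambda(T_\lambda\psi_0)=\sigma\big((T_\lambda\psi_0)^2\big)$, whence $\sigma=\sumlaLa\clam\delta_\lambda$; and $|\clam|\le C\,\SOPnorm{\sigma}\,\SOnorm{\psi_0^2}$ uniformly in $\lambda$ by translation invariance of the $\SOsp$-norm, so $\mathbf{c}$ is bounded.

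The main obstacle is the single-point statement: a $\mu\in\SOPRd$ with $\supp(\mu)\subseteq\{0\}$ equals $a_0\delta_0$ for a scalar $a_0$. Since $\ScRd$ embeds densely into $\SORd$, we have $\SOPRd\hkr\ScPRd$, so $\mu$ is a tempered distribution supported at the origin and L.~Schwartz's structure theorem gives $\mu=\sum_{|\alpha|\le N}a_\alpha\,\partial^\alpha\delta_0$. Feeding this into the admissible norm \eqref{SOPnormAdm}, for fixed $t$ the map $s\mapsto\mu(\MsTt g_0)$ is a polynomial in $s$ whose homogeneous top-degree part equals $(-1)^{N'}(2\pi i)^{N'}g_0(-t)\sum_{|\alpha|=N'}a_\alpha s^\alpha$ with $N'=\max\{|\alpha|:a_\alpha\neq0\}$; since $g_0(-t)>0$ and a non-constant polynomial on $\Rdst$ is unbounded, finiteness of $\SOPnorm{\mu}$ forces $N'=0$, i.e.\ $a_\alpha=0$ for $|\alpha|\ge1$. (Equivalently, one checks directly that $\SOPnorm{\partial^\alpha\delta_0}=\infty$ for $|\alpha|\ge1$, so derivatives of Dirac's $\delta$ are not mild.) A heavier alternative bypasses this reduction by invoking that every closed subgroup, in particular every lattice, is a set of spectral synthesis for $\SORd$, which directly yields $\sigma(h)=0$ for all $h\in\SORd$ vanishing on $\Lambda$ and hence that $\sigma$ factors through the sampling map $f\mapsto(f(\lambda))_{\lainLa}$; I would nonetheless present the elementary argument above.
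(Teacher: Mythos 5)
Your argument is correct and complete; note, however, that the paper states Proposition~\ref{charsuppH} without any proof, so there is no argument of the author's to compare yours against. The route you take is the natural one: the ``if'' direction is exactly the boundedness of the restriction map $\SORd \to \liLam$ (items~1 and~4 of Theorem~\ref{functRd01}), and for the ``only if'' direction the decomposition by a $\Lambda$-adapted BUPU reduces everything to the single-point lemma. That lemma is the only place where something could go wrong, and you handle it properly: since $\SOPRd \hkr \ScPRd$, the Schwartz structure theorem gives $\mu = \sum_{|\alpha|\le N} a_\alpha \partial^\alpha \delta_0$, and it is not enough to observe that each $\partial^\alpha\delta_0$ with $|\alpha|\ge 1$ fails to be mild (a combination of unbounded functionals could in principle be bounded); your isolation of the top-degree homogeneous part of the polynomial $s \mapsto \mu(\MsTt g_0)$, which carries the factor $g_0(-t)>0$ and cannot be cancelled by lower-order terms, closes exactly this gap. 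Two cosmetic points: the paper's definition of support contains the typo $B_\delta(0)$ for $B_\delta(z)$, and the statement's $\dlam$ is evidently meant to be $\delta_\lambda$; you silently (and correctly) read both as intended. The only steps you invoke without proof --- unconditional convergence of $f=\sumlaLa (T_\lambda\psi_0)f$ in $\SORdN$, and the module inclusion $\supp(h\cdot\sigma)\subseteq \supp(h)\cap\supp(\sigma)$ --- are standard for $\SORd$ and routine to verify from the paper's definitions, so I would accept the proof as written.
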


% UUUU
The basis for Abstract Harmonic Analysis (AHA) over 
 locally compact Abelian (LCA) groups $\cG$  is the existence of sufficiently 
 many 
% also has a {\it dual group} of
 {\it characters}, i.e., continuous homomorphism
from $\cG$ into the torus group $\Ust$ (unitary elements in $\Cst$, i.e.
the unit circle with complex multiplication). These {\it characters}
form the {\it dual group} $\cGd$, with respect to pointwise multiplication.
$\cGd$ can be endowed with a natural, locally compact  topology, thus
turning into another LCA group.

 It is not difficult to verify that there is a natural embedding of the group
$\cG$ into the dual group of $\cGd$, via $x(\chi) = \chi(x)$.
By Pontryagin's Theorem this embedding is always surjective, hence
a bijection. In this sense every LCA group is a dual group of another
LCA group (namely $\cGd$). The Fourier Transform (if we take the
integral version) can then be defined with the help of the Haar measure
as $$ \hatf(\chi) = \int_{\cG} f(x) \overline{\chi(x)} dx, \quad f \in \LiG. $$

 For the case of $\cG = \Rdst$ the dual group consists of all the characters
 $\chi_s(x) = \exp(2 \pi i s t)$, with $s,t \in \Rdst$ and the product being
 interpreted as the Euclidean scalar product $st = \sum_{k=1}^d s_k t_k$. The abstract topology on $\cGd$ is equivalent to the  
 Euclidean topology on $\Rdst$, which allows to identify $\Rdhat$ with $\Rdst$
 ($\chi_s$ corresponds to $s \in \Rdst$).

 Looking into the category of LCA groups (with objects $\cG$) with continuous homomorphisms between such groups as {\it morphism}
% [MacLane, Saunders] Categories for the working mathematician. 4th corrected printing.
\cite{ma88} it is natural to study the connections between different groups
and identify the subgroups and quotient groups of such a LCA group, and
verify their consequences for the Fourier transforms available in the
different settings.

In the context of AHA, making use of integration theory such considerations
have been undertaken in \cite{re68} when he studies Weil's Formula
(cf. \cite{ru62}) which implies that for any subgroup $H \lhd \cG$ 
the {\it canonical mapping} $T_H$, given by
\begin{equation} \label{THmap001}
{T_H f(\dot{x})=\int_H  f(x\xi) d\xi},
\quad \dot{x} = x+H,  f \in \LiG,
\end{equation}
maps $\LiGN$ boundedly (with norm $1$) onto $\Lisp(G/H)$. Moreover
the dual group of $G/H$ can be identified with  $H^\perp \lhd \widehat{G}$,
the  orthogonal subgroup to $H$.  % $\widehat{H}$

%  We will work in the Euclidean setting in this short note.

\section{Subgroups and Quotients}

One of the important properties of the functor, which assigns to each
LCA group the Segal algebra $\SOGN$ collected in the following theorem.
Again we restrict our attention to subgroups/quotients in $\Rdst$.

First let us observe that we have $\SOsp(\Zdst) = \lisp(\Zdst)$ and
(via the Fourier transform) $\SOsp(\Tst) =\Asp(\Tst)$, Wiener's algebra of absolutely convergent Fourier series.
% \newpage
\begin{theorem} \label{functRd01}
\begin{enumerate}
  \item For a subgroup $H \lhd \Rdst$ $R_H$ maps $\SORdN$  
  onto $\SOsp(H)$;
  \item For $H = \Rst^k \times  \{0\} \lhd \Rdst$ (with $k < d$) an
  extension operator can be obtained by putting $E(f) = f \tensor g$,
  for $g \in \SOsp(\Rst^{d-k})$ with $g(0)=1$, for any $f \in \SOsp(\Rst^k)$,
  where $ f\tensor g(x,y) = f(x)g(y)$.
   \item In fact, $\SORd = \SOsp(\Rst^k) \projtens \SOsp(\Rst^{d-k})$,
   with the projective tensor product norm.
  \item Consequently $\sumkZd |f(k)| \leq C \SOnorm{f}, f \in \SORd$,
          for $C > 0$.
  \item The extension from $\liLam$ (where $\Lambda \lhd \Rdst$ is any
  lattice inside of $\Rdst$) can be obtained by semidiscrete convolution,
  using suitable bump-functions $\varphi \in \SORd$ with small support and
  $\varphi(0) =1$:
  $$    E({\bf c}) = \sumkZd c_k T_k \varphi
   = \left(\sumkZd  c_k \delta_k \right ) \ast \varphi, \quad \mbox{for}
   \,\,  {\bf c} =  (c_k)_{k \in \Zdst} \in \liLam. $$
\end{enumerate}
\end{theorem}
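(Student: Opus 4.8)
The plan is to prove the five parts of Theorem \ref{functRd01} more or less in the order given, since each part feeds into the next. For part (1), I would first treat the model case $H = \Rst^k \times \{0\}$, where the restriction $R_H f(x) = f(x,0)$ is easily seen to map $\SORd$ into $\SOsp(\Rst^k)$: using the atomic/STFT characterization of $\SORdN$ one checks that restricting a TF-shift $M_s T_t g_0$ of a Gaussian on $\Rdst$ to the coordinate subspace again produces a TF-shift of a Gaussian on $\Rst^k$ (up to a scalar of controlled size), so the $\ell^1$ summability of the atomic coefficients is inherited, giving boundedness. Surjectivity then follows from part (2): the extension $E(f) = f \tensor g$ with $g(0)=1$ is a bounded right inverse, since $\SORd$ is stable under tensor products (this is standard, and also part (3)). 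For a general closed subgroup $H \lhd \Rdst$ one reduces to the model case by a linear change of variables: every closed subgroup of $\Rdst$ is, after an automorphism of $\Rdst$, of the form $\Rst^k \times \Zst^m \times \{0\}$, and $\SORd$ is invariant (with equivalent norms) under $GL(d,\Rst)$; the discrete factor $\Zst^m$ is handled by the observation already recorded in the text that $\SOsp(\Zst^m) = \ell^1(\Zst^m)$ together with the sampling estimate, which is essentially part (4).

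For part (3) I would invoke the known identification $\SORd \cong \SOsp(\Rst^k) \projtens \SOsp(\Rst^{d-k})$; the cleanest route is again through the STFT, showing that the tensor product of Gabor atoms on the two factors forms an atomic decomposition system for $\SORd$ and that the projective tensor norm is equivalent to the $\SOsp$-norm via the atomic coefficient $\ell^1$-norm. Part (2) is then immediate once (3) is in hand, since $f \mapsto f \tensor g$ is clearly bounded from $\SOsp(\Rst^k)$ into the projective tensor product when $g$ is a fixed element of $\SOsp(\Rst^{d-k})$, and the condition $g(0)=1$ is exactly what makes $R_H E = \mathrm{id}$. Part (4), $\sumkZd |f(k)| \le C \SOnorm{f}$, follows by combining part (1) for the subgroup $H = \Zdst$ (giving $R_{\Zdst} f \in \SOsp(\Zdst)$) with the identity $\SOsp(\Zdst) = \lisp(\Zdst)$ and the boundedness of $R_{\Zdst}$; this is the discrete sampling inequality and is the engine behind Poisson's formula quoted in the introduction.

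For part (5) the extension operator $E(\mathbf c) = \sumkZd c_k T_k \varphi = \big( \sumkZd c_k \delta_k \big) \ast \varphi$ must be shown to map $\liLam$ boundedly into $\SORd$ with $R_\Lambda E(\mathbf c) = \mathbf c$. Boundedness is the semidiscrete convolution estimate $\big\| \sumkZd c_k T_k \varphi \big\|_{\SOsp} \le \big( \sumkZd |c_k| \big) \SOnorm{\varphi}$, which is just the triangle inequality plus translation-invariance of the $\SOsp$-norm. The left-inverse property needs $\varphi(0)=1$ together with $\varphi$ supported in a fundamental-domain-sized neighbourhood of $0$ so that only the $k$-th term contributes at the lattice point $k$, i.e. $T_j \varphi(k) = \varphi(k-j) = \delta_{j,k}$; so one chooses $\supp \varphi$ small enough relative to the lattice $\Lambda$ and a $\varphi \in \SORd$ with these normalisation properties (such bump functions exist since $\ScRd \subset \SORd$ densely).

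The main obstacle I anticipate is the reduction in part (1) from a general closed subgroup to the model coordinate subspace: one must carefully combine the structure theorem for closed subgroups of $\Rdst$, the $GL(d,\Rst)$-quasi-invariance of $\SORd$, and the passage between the continuous factor $\Rst^k$ and the discrete factor $\Zst^m$ — each ingredient is standard, but making the norm estimates track through the change of variables (so that the final map really lands in $\SOsp(H)$ with a uniform constant) is where the bookkeeping is heaviest. Everything else is either a direct consequence of the atomic/STFT description of $\SORdN$ recalled in Section 2 or a routine triangle-inequality estimate.
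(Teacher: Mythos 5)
The paper states Theorem~\ref{functRd01} without any proof at all: it is presented as a summary of known functorial properties of $\SOsp$, established in \cite{fe81-2} and surveyed in \cite{ja18}, so there is no argument in the text to compare yours against. Your outline is, however, a correct reconstruction of the standard proofs. The restriction/tensor-factorization argument for the model subgroup $\Rst^k\times\{0\}$ via the atomic (STFT) characterization, the use of $E(f)=f\tensor g$ with $g(0)=1$ as a bounded right inverse, the reduction of a general closed subgroup to $\Rst^k\times\Zst^m\times\{0\}$ by a fixed element of $GL(d,\Rst)$ (under which $\SORd$ is invariant), the derivation of (4) from $\SOsp(\Zdst)=\lisp(\Zdst)$, and the triangle-inequality bound $\SOnorm{\sumkZd c_k T_k\varphi}\le \big(\sumkZd|c_k|\big)\SOnorm{\varphi}$ together with the support condition $\varphi(k)=\delta_{k,0}$ on $\Lambda$ for (5) are exactly the arguments one finds in \cite{fe81-2} (Theorem~7 there) and \cite{ja18}. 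You also correctly locate where the genuine work lies: the identification $\SORd=\SOsp(\Rst^k)\projtens\SOsp(\Rst^{d-k})$ and the surjectivity of $R_H$ for arbitrary closed $H$ are the deep results; the rest is bookkeeping. The only caution I would add is that in part~(3) "invoking the known identification" is circular if (3) is what you are asked to prove, so the atomic-decomposition route you sketch (tensor products of Gabor atoms generate $\SORd$ with equivalent $\lsp^1$ coefficient norms) should be carried out in full rather than cited; that is the one place where your sketch leans on the conclusion.
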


\section{Corresponding Properties of Mild Distributions}

Abstract Harmonic Analysis views periodic functions as functions
on the torus group (assuming that the periodicity is known!) and provides
the classical setting of (orthogonal) Fourier series and discrete functions,
such as time-series as functions over $\Zdst$, which have a well-defined
DFT (Discrete Fourier transform). Functions which are the sum of
two incompatible periodic functions can still be treated by the
classical theory of Almost Periodic (AP) functions, using properly
defined {\it means}.

In contrast, the setting of mild distributions on $\Rdst$ allows to
view both special types of ``signals'' (independent of the periodicity)
as mild distributions, which have a Fourier transform in the $\SORd$-sense.
Let us just recall that this is easily defined by means of the classical
formula
$ \hatsi(f) = \sigma(\hatf), \quad f \in \SORd. $
This definition only makes use of the (isometric) Fourier invariance of $\SORd$
and also gives that it is isometric (and $\wwst$continuous) on $\SOPRdN$.
In this way it is clear that (due to the density of $\ScRd$ in $\SORdN$)
this is just the restriction of the generalized FT in the setting
of tempered distributions, which leaves $\SOPRd$ invariant. The
alternative, more elementary way using the sequential approach (as
described in \cite{fe20-1}) is of course equivalent.

% Let us describe one of the connections (without detailed proofs here) by 
% a typical case illustrating the general principle
%by concrete examples. % , usually close to the classical setting.
The connection to the classical setting is illustrated next.
\begin{lemma} \label{periodic-a1}
Assume that a locally integrable function $f \in \Liloc(\Rst)$ is $a$-periodic
for some $a > 0$. Then its Fourier transform (in the sense of $\SOPRd$) is
a weighted Dirac comb of the form
$$ \hatf = C_a \sumnZ  \hatf(n/a) \, \delta_{n/a}, \quad f \in \SORd, $$
where the sampling values $\hatf(n/a)$ can be identified (maybe up
to some constant) with the classical Fourier coefficients of the form
$$   F(n) :=  \int_{[0,a]} f(t) e^{2 \pi i n/a} dt, \quad n \in \Zst.$$
\end{lemma}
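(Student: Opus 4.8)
The plan is to reduce the statement to Proposition~\ref{charsuppH} by converting the periodicity of $f$ into a support condition on $\hat f$, and then to match the resulting Dirac weights with the classical Fourier coefficients. First I would note that an $a$-periodic $f \in \Liloc(\Rst)$ is a translation-bounded measure on $\Rst$, hence lies in $\SOPRd$; in the sequential picture one simply approximates $f$ by the truncations $f \cdot \mathbf{1}_{[-Na,Na]} \in \Lisp(\Rst) \subseteq \SOPRd$, which, thanks to the periodicity of $f$, stay bounded in $\SOPRd$ and converge $\wstd$ly to $f$. In particular $\hat f \in \SOPRd$ is well defined.

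Next, from $T_a f = f$ and $\widehat{T_a f} = M_{-a}\hat f$ one gets $M_{-a}\hat f = \hat f$, i.e.\ $\hat f\big((M_{-a}-\mathrm{Id})g\big) = 0$ for every $g \in \SORd$, where modulation acts on mild distributions by $(M_s\sigma)(g) = \sigma(M_s g)$. If $k \in \SORd$ has support in a ball disjoint from the lattice $\Lambda := \tfrac1a\Zst$, then $\xi \mapsto e^{-2\pi i a\xi}-1$ is bounded away from $0$ on $\supp(k)$, so $g := k/(e^{-2\pi i a\xi}-1)$, cut off by a bump equal to $1$ on $\supp(k)$, lies in $\SORd$ and satisfies $k = (M_{-a}-\mathrm{Id})g$; hence $\hat f(k) = 0$. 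This shows $\supp(\hat f) \subseteq \Lambda = \tfrac1a\Zst$, and Proposition~\ref{charsuppH} then produces a bounded sequence $(c_n)_{n\in\Zst}$ with $\hat f = \sum_{n\in\Zst} c_n\,\delta_{n/a}$, the representation being unique since the $\delta_{n/a}$ have pairwise disjoint supports.

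It remains to identify the $c_n$. Picking $g \in \SORd$ with $\hat g$ supported in a ball of radius $<1/a$ about $n/a$ and with $\hat g(n/a)=1$ gives, by the definition $\hat\sigma(h)=\sigma(\hat h)$ and Fourier inversion, $c_n = \hat f(\hat g) = f(\widehat{\hat g}) = \int_\Rst f(t)\,g(-t)\,dt$; folding this integral over one period and using the $a$-periodicity of $f$ rewrites it through the $a$-periodization of $t \mapsto g(-t)$, whose Fourier coefficients are the samples of $\widehat{g(-\cdot)}$ along $\tfrac1a\Zst$, and by the choice of $\hat g$ only one term survives, leaving $c_n = \tfrac1a \int_{[0,a]} f(t)\,e^{-2\pi i nt/a}\,dt$. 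Thus $C_a = \tfrac1a$ and, up to this constant and the harmless reflection $n \mapsto -n$, the weights $\hat f(n/a)$ coincide with the classical Fourier coefficients $F(n)$. A variant avoiding the division argument is to invoke Fej\'er's theorem: the Ces\`aro means of the classical Fourier series of $f$ converge to $f$ in $\Lisp([0,a])$, hence --- after periodizing the test function, using that sampling an $\SORd$-function along a lattice is summable --- as mild distributions on $\Rst$; applying the $\wwst$continuous Fourier transform and comparing with the unique Dirac-comb form of $\hat f$ yields the same coefficients.

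The routine parts are the boundedness and convergence statements in the first and third steps, which are exactly what Proposition~\ref{charsuppH} (and the basic properties of $\SOPRd$ recalled earlier) supply. The conceptual core is the second step --- that a functional equation $M_{-a}\hat f = \hat f$ forces $\supp(\hat f)$ into the dual lattice $\tfrac1a\Zst$, i.e.\ that a lattice is a set of spectral synthesis for $\SORd$ --- but here this is genuinely easy, precisely \emph{because} $\tfrac1a\Zst$ is discrete, so one only ever divides by $e^{-2\pi i a\xi}-1$ where it is bounded below. The one genuinely fiddly point is the bookkeeping in the last step: keeping the chosen Fourier normalization consistent so that the constant $C_a$ and the sign of the index come out as stated.
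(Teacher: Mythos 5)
Your proof is correct, and in fact the paper states Lemma~\ref{periodic-a1} without any proof, so there is no printed argument to compare against; your write-up supplies exactly the kind of argument the surrounding text gestures at. The route --- (i) periodic locally integrable functions are translation-bounded, hence in $\SOPRd$; (ii) $T_af=f$ gives $M_{-a}\hatf=\hatf$, and dividing a test function supported off $\tfrac1a\Zst$ by $e^{-2\pi i a\xi}-1$ (legitimate since $\SORd$ is a pointwise module over $C_c^\infty\subset\ScRd$) forces $\supp(\hatf)\subseteq\tfrac1a\Zst$; (iii) Proposition~\ref{charsuppH} then yields the Dirac-comb form, and testing against a $g$ with $\hat g$ a bump at a single lattice point, combined with Weil-type periodization of $g$ and Poisson summation, identifies $c_n=\tfrac1a\int_0^a f(t)e^{-2\pi i nt/a}\,dt$ --- uses precisely the tools the paper has set up (support characterization, $T_H$ and $R_H$, Poisson's formula). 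Your bookkeeping is also right where the statement itself is sloppy: the displayed $F(n)=\int_{[0,a]}f(t)e^{2\pi i n/a}dt$ is missing a $t$ in the exponent and has the opposite sign convention, and the lemma's own hedge ``maybe up to some constant'' is exactly your $C_a=1/a$ and the reflection $n\mapsto -n$. The only step you assert rather than prove is that the truncations $f\cdot\mathbf{1}_{[-Na,Na]}$ are uniformly bounded in $\SOPRd$; this is immediate from $f\in W(\Lsp^1,\lsp^\infty)$ paired against $\SORd\hkr W(\Csp_0,\lsp^1)$, so it is a routine gap, not a real one.
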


Next we turn to ``discrete signals''. Since both $\SORd$ and $\SOPRd$ (by
consequence) are invariant under affine transformations and arbitrary
discrete (cocompact) lattices $\Lambda \lhd \Rdst$ are of the form
$\Lambda = \Asp \ast \Zdst$, for some non-singular $d \times d$-matrix
$\Asp$ it suffices to formulate  the result  again (for convenience) just for $\Lambda = \Zdst$.

\section{Final Comments}

This paper is part of a series of notes with the joint goal of motivating
colleagues in mathematics and in the applied scientist to make use of the theory of mild distributions, which is best understood in the spirit of
THE Banach Gelfand Triple $\SOGTrRd$. It is part of the foundations of a
movement towards what the author would like to promote by the name of
``Conceptual Harmonic Analysis'',  a blend of abstract and applied Harmonic
Analysis, also preparing ground for the application of fast numerical methods.
Recent contributions emphasizing different aspects of this scenario are
\cite{fe24}, \cite{fe22-1}, \cite{fe24-1} or the more comprehensive
articles \cite{cofelu08} and \cite{feja22}.
Group theoretical aspects are also discussed in \cite{fegr92-2} and \cite{festch95}.

\nocite{gr01,rest00} 

\newcommand{\ORCID}{https://orcid.org/0000-0002-9927-0742}
\bibliographystyle{abbrv}  %%%%% NuHAG-BibTeX %%%%%%%
% \bibliographystyle{plain}

% \bibliography{D:/nuhagall/nuhagbib/nhgbib}

Address:

\FeiAdr

\ORCID

\end{document}